 \newtheorem{theorem}{Theorem}[section]
 \newtheorem{corollary}[theorem]{Corollary}
 \newtheorem{lemma}[theorem]{Lemma}
 \newtheorem{proposition}[theorem]{Proposition}
 \theoremstyle{definition}
 \theoremstyle{remark}
 \newtheorem{remark}[theorem]{Remark}
 \theoremstyle{definition}
 \newtheorem{example}[theorem]{Example}
\newcommand{\rk}{\mathrm{rk}}
\newcommand{\RR}{\mathbb{R}}
\newcommand{\CC}{\mathbb{C}}
\newcommand{\XX}{\mathbb{X}}
\newcommand{\PP}{\mathbb{P}}
\newcommand{\KK}{\mathbb{K}}
\title{On the Real rank of monomials}
\author[E. Carlini]{Enrico Carlini}
\address[E. Carlini]{DISMA- Department of Mathematical Sciences, Politecnico di Torino, Turin, Italy}
\email{enrico.carlini@polito.it}
\author[M. Kummer]{Mario Kummer}
\address[M. Kummer]{Fachbereich Mathematik und Statistik, 
Universit\"{a}t Konstanz, Constance, Germany}
\email{mario.kummer@uni-konstanz.de}
\author[A. Oneto]{Alessandro Oneto}
\address[A. Oneto]{Department of Mathematics, Stockholm University, SE-106 91, Stockholm,  Sweden}
\email{oneto@math.su.se}
\author[E. Ventura]{Emanuele Ventura}
\address[E. Ventura]{Department of Mathematics and Systems Analysis, Aalto University,  Finland}
\email{emanuele.ventura@aalto.fi}
\subjclass[2000]{14P99, 12D05, 13P10, 14A25, }
\begin{document}

\maketitle

\date{}

\begin{abstract}

In this paper we study the real rank of monomials and we give an upper bound for the real rank of all monomials. We show that the real and the complex ranks of a monomial coincide if and only if the least exponent is equal to one.

\end{abstract}


\section{Introduction}

Let $\mathbb K$ be a field and $S=\mathbb K[x_0,\ldots,x_n] = \bigoplus_{d\geq 0} S_d$ be the ring of polynomials with
coefficients in $\mathbb K$ and with standard grading, i.e. $S_d$ is the $\KK$-vector space of homogeneous polynomials,
or forms, of degree $d$.

Given $F\in S_d$, we define a {\it Waring decomposition of $F$ over $\KK$} as a sum
\begin{equation}\label{Waring dec}
F=\sum_{i=1}^s c_iL_i^d,
\end{equation}
where  $c_i \in \KK$ and the $L_i$'s are linear forms over $\KK$. The smallest $s$ for which such a decomposition exists is called {\it Waring rank of $F$ over $\KK$} and it is denoted by $\rk_{\KK}(F)$.


The study of Waring decompositions over the complex numbers goes back to the work of Sylvester \cite{Sylv} and other
geometers and algebraists of the XIX century; see  \cite{IK} for historical details. Even if it has a long history, it was only in 1995 that
the Waring ranks were determined for {\it general} forms over the complex numbers;  see \cite{AH}.

However, the computation of the Waring rank is not known for all forms. The case of complex
binary forms goes back to Sylvester and has been recently reviewed in \cite{CS}. More recently, some progress has been made:
the complex Waring rank of monomials (and sums of pairwise coprime monomials) and
complex ranks of other sporadic families of polynomials have been determined in \cite{CCG} and \cite{CCCGW} respectively.
Some algorithms have been proposed, but they require technical restrictions to compute the rank; see
\cite{BGI, BCMT, CGLM, OO}.




The computation of Waring ranks over the real numbers is even more difficult. For instance, the real rank for monomials is only known
in the case of two variables; see \cite{BCG}. In \cite{Rez}, results on the rank of binary forms over the reals and other
fields are exhibited.

In the present paper we study the connection between the complex and real rank of monomials.


\smallskip

The paper is structured as follows. In Section \ref{Background}, we introduce notation and background. In Section \ref{Real and complex of monomials}, we prove our main results. We give an upper bound for the real rank of
monomials in Theorem \ref{Upper Bound THM}. In Proposition \ref{Prop_d=2_upperBound} we show that this upper bound is not always sharp. In Theorem \ref{characterization}, we prove that for monomials the real and complex rank coincide if and only if their least exponent is equal to one.

\section{Background}\label{Background}

Let $T=\mathbb K[X_0,\ldots,X_n] = \bigoplus_{d\geq 0} T_d$ be the dual ring of $S$ acting by differentiation on $S$:
$$
X_i \circ x_j :=\frac{\partial}{\partial x_i}x_j.
$$
For any homogeneous polynomial $F\in S$, the {\it perp ideal} of $F$ is
$$
F^{\perp}:=\lbrace \partial\in T  ~|~  \partial F=0 \rbrace \subset T.
$$
One of the key results to study the Waring problem is the Apolarity Lemma.

\begin{lemma}[{\sc Apolarity Lemma}, {\cite[{\bf Lemma} 1.15]{IK}}]\label{Apolarity}

Let $F\in S$ be a form of degree $d$. The following are equivalent:
\begin{enumerate}
 \item $F = \sum_{i=1}^r L_i^d$, where the $L_i$'s are linear forms;
 \item $ I_{\XX}\subset F^{\perp}$, where $I_{\XX}$ is the ideal defining a set $\XX$ of $r$ reduced points.
\end{enumerate}
\end{lemma}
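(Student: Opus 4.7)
The plan is to exploit the nondegeneracy of the apolarity pairing $T_d \times S_d \to \KK$, $(G, F) \mapsto G \circ F$, together with the elementary identity that, for a linear form $L = a_0x_0 + \cdots + a_nx_n$ corresponding to the point $P = [a_0 : \cdots : a_n] \in \PP^n$ and any $G \in T_k$ with $k \leq d$,
\[
G \circ L^d = \frac{d!}{(d-k)!}\, G(a_0, \ldots, a_n)\, L^{d-k}.
\]
I would prove this identity by induction on $k$, starting from the base case $k=1$ (where it reduces to $X_i \circ L^d = d\, a_i\, L^{d-1}$). In particular $G \in I_P$ forces $G \circ L^d = 0$, so $I_P \subset (L^d)^\perp$ for every point $P$.

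For the direction $(1) \Rightarrow (2)$, suppose $F = \sum_{i=1}^r L_i^d$ with $L_i$ associated to $P_i$ and $\XX = \{P_1, \ldots, P_r\}$. Any $G \in I_\XX$ vanishes at each $P_i$, so by the identity above $G \circ L_i^d = 0$ when $\deg G \leq d$; when $\deg G > d$ the expression is automatically zero for degree reasons. Summing, $G \circ F = 0$, whence $I_\XX \subset F^\perp$.

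For the converse $(2) \Rightarrow (1)$, I would introduce the subspace $V := \langle L_1^d, \ldots, L_r^d \rangle \subset S_d$ and argue that the annihilator of $V$ in $T_d$ under the apolarity pairing equals $(I_\XX)_d$. The key observation is that the evaluation map $\mathrm{ev}: T_d \to \KK^r$, $G \mapsto (G(P_i))_i$, and the map $\psi: \KK^r \to S_d$, $(c_i) \mapsto \sum_i c_i L_i^d$, are adjoint with respect to the apolarity pairing and the standard bilinear form on $\KK^r$, up to the scalar $d!$; this follows from the formula above in the case $k = d$. Consequently $V^\perp \cap T_d = \ker(\mathrm{ev}) = (I_\XX)_d$, and by nondegeneracy $V$ coincides with the annihilator in $S_d$ of $(I_\XX)_d$. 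The hypothesis $I_\XX \subset F^\perp$ specializes in degree $d$ to $(I_\XX)_d \subset F^\perp \cap T_d$, so $F \in V$; hence $F = \sum_i c_i L_i^d$, and absorbing the $c_i$ into the $L_i$ (possible over an algebraically closed field of characteristic zero) yields the stated form.

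The main technical point is the adjointness of $\mathrm{ev}$ and $\psi$, which crucially provides equality (not merely containment) of $V^\perp \cap T_d$ and $(I_\XX)_d$ without any general position hypothesis on the points of $\XX$. Once this adjointness is in place, the rest is formal manipulation with perfect pairings.
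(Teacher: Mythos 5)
The paper does not prove this lemma at all: it is quoted verbatim from Iarrobino--Kanev \cite[Lemma 1.15]{IK} and used as a black box. Your argument is the standard proof of the Apolarity Lemma and is correct: the identity $G \circ L^d = \tfrac{d!}{(d-k)!}G(P)L^{d-k}$ gives $(1)\Rightarrow(2)$ directly, and the adjointness of evaluation at $\XX$ and the map $(c_i)\mapsto \sum_i c_iL_i^d$ under the (nondegenerate, in characteristic zero) apolarity pairing correctly identifies the annihilator of $(I_\XX)_d$ in $S_d$ with $\langle L_1^d,\ldots,L_r^d\rangle$, giving $(2)\Rightarrow(1)$. Your closing caveat is worth keeping: absorbing the scalars $c_i$ into the $L_i$ requires $d$-th roots, so over $\RR$ (where this paper actually applies the lemma) the correct conclusion of $(1)$ is $F=\sum_i c_iL_i^d$ with $c_i\in\RR$, matching the paper's definition of a Waring decomposition in~(\ref{Waring dec}) rather than the coefficient-free form in which the lemma is stated.
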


\noindent A set of reduced points $\XX$ in $\PP^n$ is said to be {\it apolar} to $F$ if $I_{\XX}\subset F^{\perp}$.

As already mentioned, the complex rank of monomials has been determined in \cite{CCG}.

\begin{theorem}[{\cite[{\bf Corollary} 3.3]{CCG}}]\label{CCG Theorem}

Let $M=x_0^{a_0}x_1^{a_1}\ldots x_n^{a_n}$ with $0<a_0\leq a_1\leq \ldots \leq a_n$. Then $$
\rk_{\CC} (M)=\frac{1}{a_0+1}\prod_{i=0}^n(a_i+1).
$$

\end{theorem}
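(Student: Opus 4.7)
The plan is to invoke the Apolarity Lemma (Lemma \ref{Apolarity}): computing $\rk_\CC(M)$ reduces to finding the smallest reduced $0$-dimensional $\XX \subset \PP^n$ with $I_\XX \subset M^{\perp}$. A direct computation using the differentiation pairing identifies the perp ideal as the monomial complete intersection
\[
M^{\perp} = (X_0^{a_0+1}, X_1^{a_1+1}, \ldots, X_n^{a_n+1}) \subset T,
\]
since a monomial $X_0^{c_0}\cdots X_n^{c_n}$ annihilates $M$ precisely when some $c_i$ exceeds $a_i$. The problem then splits into an upper bound (exhibit an apolar scheme of size $N := \frac{1}{a_0+1}\prod_{i=0}^n(a_i+1)$) and a matching lower bound.

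For the upper bound, I would use a roots-of-unity construction. Let $\zeta_i \in \CC$ be a primitive $(a_i+1)$-th root of unity for $i=1,\dots,n$, and for each tuple $\mathbf{j}=(j_1,\dots,j_n)$ with $0\le j_i\le a_i$ define the linear form $L_\mathbf{j} := x_0 + \zeta_1^{j_1}x_1 + \cdots + \zeta_n^{j_n}x_n$ and the weight $w_\mathbf{j} := \prod_{i=1}^n \zeta_i^{-j_i a_i}$. There are exactly $N$ such tuples. Expanding $\sum_\mathbf{j} w_\mathbf{j} L_\mathbf{j}^d$ by the multinomial theorem and applying the orthogonality relation $\sum_{j=0}^{a_i}\zeta_i^{j(b_i - a_i)} = (a_i+1)$ when $(a_i+1)\mid (b_i-a_i)$ and $0$ otherwise, together with the degree constraint $\sum b_i = d = \sum a_i$, forces $b_i = a_i$ for all $i\ge 1$ (and hence $b_0 = a_0$). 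So $\sum_\mathbf{j} w_\mathbf{j} L_\mathbf{j}^d$ equals $M$ up to a nonzero scalar, yielding $\rk_\CC(M) \le N$.

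For the lower bound, one must show that every reduced apolar set has at least $N$ points; this is the main obstacle. A natural approach is induction on the number of variables $n$, taking Sylvester's rank formula for binary forms as the base case. The inductive step would take a hypothetical apolar scheme $\XX$ of size less than $N$, project it away from the coordinate point corresponding to the variable $x_0$ of least exponent, and argue that the image is apolar to the monomial $x_1^{a_1}\cdots x_n^{a_n}$ in one fewer variable. The choice of $x_0$ is critical: projecting along the variable with the smallest exponent is what produces the factor $1/(a_0+1)$ in the formula, since this is the variable whose perp-power $X_0^{a_0+1}$ imposes the weakest constraint and so yields the best control over the fiber sizes of the projection. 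An alternative route, more representation-theoretic, is to exploit the Artinian Gorenstein structure of $T/M^{\perp}$ and quote a general length-versus-apolar-rank bound for monomial complete intersections; either way the bookkeeping of exactly which variable is projected away is the delicate point one must get right.
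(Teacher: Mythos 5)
First, note that the paper does not prove this statement at all: Theorem \ref{CCG Theorem} is imported verbatim from \cite[Corollary 3.3]{CCG}, so your attempt has to be measured against the argument in that reference rather than against anything in this paper. Your upper bound is correct and essentially the standard one: your $N=\prod_{i=1}^n(a_i+1)$ linear forms $L_{\mathbf{j}}$ are exactly the duals of the points of the complete intersection $(X_1^{a_1+1}-X_0^{a_1+1},\ldots,X_n^{a_n+1}-X_0^{a_n+1})\subset M^{\perp}$ used in \cite{CCG}, and your orthogonality computation correctly isolates the multidegree $(a_0,\ldots,a_n)$, since any other admissible $\mathbf{b}$ would force $b_0=a_0-\sum_i k_i(a_i+1)<0$.

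The genuine gap is the lower bound, which you flag as the main obstacle but whose proposed repair does not work. Projecting a hypothetical apolar set $\XX$ from the coordinate point dual to $x_0$ gives $\pi(\XX)\subset\PP^{n-1}$ with $I_{\pi(\XX)}=I_{\XX}\cap\CC[X_1,\ldots,X_n]\subseteq (x_1^{a_1}\cdots x_n^{a_n})^{\perp}$, so the image is indeed apolar to the smaller monomial; but all this yields is $|\XX|\geq|\pi(\XX)|\geq \rk_{\CC}(x_1^{a_1}\cdots x_n^{a_n})=\prod_{i=2}^n(a_i+1)$, which misses the target $\prod_{i=1}^n(a_i+1)$ by a factor of $a_1+1$. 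Closing the induction would require every fiber of the projection to carry at least $a_1+1$ points of $\XX$, and apolarity gives no such control over fibers. The argument in \cite{CCG} replaces projection by a hyperplane section: if no point of $\XX$ lies on $\{X_0=0\}$, then $X_0$ is a nonzerodivisor on $T/I_{\XX}$, hence $|\XX|=\dim_{\CC}T/(I_{\XX}+(X_0))\geq \dim_{\CC}T/(M^{\perp}+(X_0))=\dim_{\CC}\CC[X_1,\ldots,X_n]/(X_1^{a_1+1},\ldots,X_n^{a_n+1})=\prod_{i=1}^n(a_i+1)$, which is the full bound in one step; the points of $\XX$ lying on $\{X_0=0\}$ are then disposed of by a colon-ideal argument. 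Your instinct that the variable of least exponent is the one to single out is right --- $M^{\perp}+(X_0)$ has the largest colength among the $M^{\perp}+(X_i)$ --- but the mechanism must be the Artinian reduction, not the projection.
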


Moreover, any set of reduced points apolar to a monomial $M$, whose cardinality is equal to $\rk_{\CC}(M)$, is
a complete intersection.

\begin{theorem}[{\cite[{\bf Theorem} 1]{BBT}}]\label{BBT Prop}

Let $M=x_0^{a_0}x_1^{a_1}\ldots x_n^{a_n}$ with $0<a_0\leq a_1\leq \ldots \leq a_n$. Then, any ideal $I_{\XX}$ of a set of $\rk_{\CC}(M)$ points apolar to $M$ is a complete intersection of the form
\[(X_1^{a_1+1}-F_1X_0^{a_0+1},\ldots,X_n^{a_1+1}-F_nX_0^{a_0+1}),\]
where the forms $F_i$'s have degrees $a_i-a_0$.

\end{theorem}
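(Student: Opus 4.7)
The plan is to establish the key equality
\[ I_\XX + (X_0^{a_0+1}) = M^\perp, \]
from which generators of the required form drop out, and then to finish by a length count. Since $M^\perp = (X_0^{a_0+1}, X_1^{a_1+1}, \ldots, X_n^{a_n+1})$, the equality forces each $X_j^{a_j+1}$ (for $j \geq 1$) to be written as $G_j + F_j X_0^{a_0+1}$ with $G_j \in I_\XX$ and $\deg F_j = a_j - a_0$, so that $G_j = X_j^{a_j+1} - F_j X_0^{a_0+1} \in I_\XX$ has the stated form.

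To prove the equality I plan to compare $\KK$-dimensions. There is a natural surjection $T/(I_\XX+(X_0^{a_0+1})) \twoheadrightarrow T/M^\perp$, and the target has $\KK$-dimension $\prod_{i=0}^n(a_i+1)$ (as $T/M^\perp$ is a complete intersection with generators of degrees $a_0+1, \ldots, a_n+1$). Assuming $X_0$ is a non-zero divisor on $T/I_\XX$---equivalently, $\XX \cap V(X_0) = \emptyset$---the short exact sequence
\[
0 \to (T/I_\XX)[-a_0-1] \xrightarrow{\cdot X_0^{a_0+1}} T/I_\XX \to T/(I_\XX + (X_0^{a_0+1})) \to 0
\]
yields $HS_{T/(I_\XX+(X_0^{a_0+1}))}(t) = (1-t^{a_0+1}) \cdot HS_{T/I_\XX}(t)$. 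Writing $HS_{T/I_\XX}(t) = h_\XX(t)/(1-t)$ with $h_\XX(1) = |\XX|$, evaluation at $t=1$ gives a total dimension of $(a_0+1)|\XX| = (a_0+1)\prod_{i=1}^n(a_i+1) = \prod_{i=0}^n(a_i+1)$, matching the target; the surjection is therefore an isomorphism.

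The main obstacle will be to show $\XX \cap V(X_0) = \emptyset$. Writing the minimal Waring decomposition $M = \sum_{p \in \XX} c_p L_p^d$ (with all $c_p \neq 0$ by minimality), a point $p \in V(X_0) \cap \XX$ corresponds to some $L_p$ with no $x_0$-component. When $a_0 \geq 2$, applying $\partial_0^{a_0-1}$ to the decomposition kills every such summand and exhibits a Waring decomposition of $\partial_0^{a_0-1} M$ using only $|\XX| - |\XX \cap V(X_0)|$ linear forms; but $\partial_0^{a_0-1} M$ is a scalar multiple of $x_0 x_1^{a_1}\cdots x_n^{a_n}$, which has $\CC$-rank equal to $|\XX|$ by Theorem \ref{CCG Theorem}, forcing $\XX \cap V(X_0) = \emptyset$. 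The case $a_0 = 1$ is more subtle and requires additional input, such as analyzing the apolarity constraints in low degrees.

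Finally, with the $G_j$'s in hand, set $J = (G_1, \ldots, G_n) \subseteq I_\XX$. Any common zero of the $G_j$'s with $X_0 = 0$ would force $X_i = 0$ for all $i \geq 1$, which is impossible; hence $J$ is a codimension-$n$ complete intersection cutting out a zero-dimensional subscheme of $\PP^n$ of length $\prod_{j=1}^n(a_j+1) = |\XX|$ by B\'ezout. Since $\XX \subseteq V(J)$ with equal lengths and $\XX$ is reduced, $V(J) = \XX$ scheme-theoretically, and because complete intersections in a polynomial ring are saturated ideals, $J = I_\XX$.
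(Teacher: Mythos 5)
The paper itself offers no proof of this statement: it is quoted from \cite[Theorem 1]{BBT} and used as a black box, so there is no internal argument to compare yours against. Judged on its own, your architecture is sound and close in spirit to the apolarity arguments of \cite{CCG} and \cite{BBT}: establish $I_{\XX}+(X_0^{a_0+1})=M^{\perp}$, extract $G_j=X_j^{a_j+1}-F_jX_0^{a_0+1}\in I_{\XX}$ by comparing homogeneous pieces, and identify $(G_1,\dots,G_n)$ with $I_{\XX}$ by a B\'ezout and saturation count. I checked the individual steps: the Hilbert series computation giving $(a_0+1)|\XX|=\prod_{i=0}^n(a_i+1)$, the conclusion that the surjection is an isomorphism, the fact that $V(G_1,\dots,G_n)$ misses $V(X_0)$ and is therefore a zero-dimensional complete intersection of length $|\XX|$, the saturatedness of a codimension-$n$ complete intersection, and, for $a_0\ge 2$, the differentiation trick (minimality forces all $c_p\neq 0$, and $\rk_{\CC}(x_0x_1^{a_1}\cdots x_n^{a_n})=|\XX|$ by Theorem \ref{CCG Theorem}). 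All of these are correct.

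The genuine gap is exactly where you flag it: the claim $\XX\cap V(X_0)=\emptyset$ is not established when $a_0=1$, and the whole argument hinges on it --- if $X_0$ is a zerodivisor on $T/I_{\XX}$ the exact sequence fails, $\dim_{\KK}T/(I_{\XX}+(X_0^{2}))$ exceeds $\dim_{\KK}T/M^{\perp}$, and you cannot conclude $I_{\XX}+(X_0^{2})=M^{\perp}$. Since the hypothesis is $0<a_0$, the case $a_0=1$ is part of the statement, and ``requires additional input'' leaves it unproved. The omission is not cosmetic: your differentiation argument degenerates ($\partial_0^{0}$ kills nothing), and the natural colon-ideal substitute, $I_{\XX\setminus V(X_0)}=I_{\XX}:X_0\subseteq M^{\perp}:X_0=(x_0^{a_0-1}x_1^{a_1}\cdots x_n^{a_n})^{\perp}$, yields $|\XX\setminus V(X_0)|\ge \prod_{i\ge 2}(a_i+1)$ when $a_0=1$, which is too weak; a genuinely different idea is needed there, and that is part of what \cite{BBT} supplies. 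For the one place the paper actually invokes this theorem --- the proof of Theorem \ref{characterization}, which argues under the assumption $a_0\ge 2$ --- your argument would suffice, but as a proof of the stated result it is incomplete.
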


The real rank of monomials in two variables has been computed in \cite{BCG}.

\begin{proposition}[{\cite[{\bf Proposition} 4.4]{BCG}}]\label{BCG Prop}
If $M = x_0^{a_0}x_1^{a_1}$, then $\rk_{\RR}(M) = a_0+a_1$.
\end{proposition}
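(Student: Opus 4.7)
The plan is to prove the two matching inequalities $\rk_{\RR}(M)\leq a_0+a_1$ and $\rk_{\RR}(M)\geq a_0+a_1$ using the Apolarity Lemma, after recording that $M^{\perp}=(X_0^{a_0+1},X_1^{a_1+1})$.

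For the upper bound, I would produce a reduced apolar set of $a_0+a_1$ real points in $\PP^1$. Among degree-$(a_0+a_1)$ monomials, only $X_0^{a_0}X_1^{a_1}$ lies outside $M^{\perp}$, so such a set is the vanishing locus of a squarefree real binary form $p$ of degree $a_0+a_1$ whose coefficient of $X_0^{a_0}X_1^{a_1}$ is zero and which splits completely over $\RR$. Dehomogenizing and applying Vieta, this reduces to exhibiting $a_0+a_1$ distinct real numbers $r_1,\dots,r_{a_0+a_1}$ with $e_{a_0}(r_1,\dots,r_{a_0+a_1})=0$, where $e_{a_0}$ denotes the $a_0$-th elementary symmetric polynomial. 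One obtains such a tuple by fixing $a_0+a_1-1$ distinct reals generically and solving the resulting (nontrivial linear) equation for the remaining coordinate.

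For the lower bound, suppose $\XX\subset\PP^1$ is a real reduced apolar set with $|\XX|=s$. Writing its defining binary form as $p=\sum_i c_i X_0^{s-i}X_1^i$, the containment $p\in M^{\perp}$ forces $c_i=0$ for every $i\in[s-a_0,a_1]$, a block of $a_0+a_1-s+1$ consecutive zero coefficients. If $s\leq a_0+a_1-1$, this block contains two adjacent zeros $c_j=c_{j+1}=0$; in the dehomogenization $\tilde p(t)=p(1,t)$ this translates to $\tilde p^{(j)}(0)=\tilde p^{(j+1)}(0)=0$, so $0$ is a root of $\tilde p^{(j)}$ of multiplicity at least $2$. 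But since $\tilde p$ has $\deg\tilde p$ distinct real roots, Rolle's theorem forces $\tilde p^{(j)}$ to have $\deg\tilde p - j$ simple real roots, contradicting the double root. Hence $s\geq a_0+a_1$.

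The main obstacle is on the lower bound side, namely checking that the pair $(j,j+1)$ actually lies in the interior index range $[0,\deg\tilde p-1]$ even in the boundary case $[0:1]\in\XX$ (where $c_s=0$ and $\deg\tilde p$ drops by one). This is resolved by a short dichotomy: if $a_0=1$ then the inequality $\rk_{\RR}(M)\geq\rk_{\CC}(M)=a_1+1=a_0+a_1$ is automatic from Theorem \ref{CCG Theorem}, while for $a_0\geq 2$ one checks $\deg\tilde p\geq s-1\geq s-a_0+1$, so the choice $j=s-a_0$ is valid and the Rolle argument goes through.
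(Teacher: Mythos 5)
Your argument is correct and follows essentially the same route the paper attributes to \cite{BCG}: reduce via the Apolarity Lemma to the pattern of zero coefficients of a real-rooted binary form of degree $s$ in $M^{\perp}=(X_0^{a_0+1},X_1^{a_1+1})$, which forces a block of $a_0+a_1-s+1$ consecutive vanishing coefficients. The only difference is cosmetic: where the paper invokes Lemma \ref{BCG Lemma} (proved in \cite{BCG} via Descartes' rule of signs) both for the existence of a hyperbolic form with one prescribed vanishing coefficient and for the impossibility of two consecutive vanishing ones, you reprove these facts from scratch using elementary symmetric functions and Rolle's theorem, with a careful (if slightly more conservative than necessary) treatment of the boundary case $[0:1]\in\XX$.
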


In view of these results, we can easily note that whenever one of the exponents of a binary monomial is equal to
one, then real and complex rank coincide. Hence, in the case of two variables the monomials whose real and complex rank coincide 
are those whose least exponent is one. The proof of Proposition \ref{BCG Prop} is a combination of Apolarity Lemma with a straightforward application of the Descartes' rule of signs. \\

\begin{lemma}[{\cite[{\bf Lemma} 4.1-- 4.2]{BCG}}]\label{BCG Lemma}

Let $F$ be a real polynomial in one variable $$F = x^d + c_1x^{d-1}+ \ldots + c_{d-1}x + c_d.$$
Then
\begin{enumerate}
 \item for any $i<d$, there is a choice of $c_j$'s such that $F$ has distinct real roots and $c_i=0$;
\item if $c_i=c_{i+1}=0$ for some $0 < i < d$, then $F$ has a non real root.
\end{enumerate}

\end{lemma}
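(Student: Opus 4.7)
My plan is to treat the two parts separately, in both cases working with the roots of $F$ rather than its coefficients. Writing $F = \prod_{j=1}^d (x - r_j)$, one has $c_k = (-1)^k e_k(r_1, \dots, r_d)$, where $e_k$ denotes the elementary symmetric polynomial of degree $k$; the constraint $c_i = 0$ is therefore equivalent to $e_i(r) = 0$.

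For part (1), the key observation is the affine-linear identity
\[
e_i(r_1, \dots, r_d) \;=\; e_i(r_1, \dots, r_{d-1}) + r_d \cdot e_{i-1}(r_1, \dots, r_{d-1}).
\]
I would fix $r_1, \dots, r_{d-1}$ to be a generic choice of $d-1$ distinct real numbers so that $e_{i-1}(r_1, \dots, r_{d-1}) \neq 0$, and then solve uniquely for $r_d \in \RR$. For generic $r_1, \dots, r_{d-1}$, the resulting $r_d$ also differs from every $r_j$, since each condition $r_d = r_j$ cuts out a proper algebraic subset of $\RR^{d-1}$. The polynomial produced this way has $d$ distinct real roots and satisfies $c_i = 0$.

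For part (2), I would argue the contrapositive: if $F$ has $d$ distinct real roots, then $c_i$ and $c_{i+1}$ cannot both vanish. Iterating Rolle's theorem $d-i-1$ times, the derivative $F^{(d-i-1)}$ inherits $i+1$ distinct real roots. Direct differentiation gives
\[
F^{(d-i-1)}(x) \;=\; \sum_{k=0}^{i+1} c_k \, \frac{(d-k)!}{(i+1-k)!} \, x^{i+1-k},
\]
so the coefficient of $x$ equals $c_i \cdot (d-i)!$ and the constant term equals $c_{i+1} \cdot (d-i-1)!$. Under the hypothesis $c_i = c_{i+1} = 0$, both of these vanish, so $x^2 \mid F^{(d-i-1)}$ and hence $0$ is a root of $F^{(d-i-1)}$ with multiplicity at least two. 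This contradicts the distinctness of its $i+1$ real roots, completing the argument.

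The main obstacle is the treatment of degenerate cases in part (2): a polynomial such as $x^2(x+c_1)$ satisfies $c_2 = c_3 = 0$ yet has only real roots. The Rolle-based argument therefore really shows that under $c_i = c_{i+1} = 0$ the polynomial $F$ is not \emph{separable} over $\RR$, i.e. it has either a non-real or a repeated root. This is precisely what is needed when combining with the Apolarity Lemma to compute the real rank of binary monomials, since reduced zero-schemes correspond to polynomials with distinct roots; for part (1), the only delicate points are the simultaneous Zariski-open genericity conditions, which are routine to verify.
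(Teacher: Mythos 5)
Your argument is essentially correct, but note first that the paper itself gives no proof of this lemma: it is quoted from [BCG, Lemmas 4.1--4.2], and the only hint about the original argument is the remark that it rests on Descartes' rule of signs. Your proofs are therefore genuinely different in method. For part (1), passing to the roots and using the recursion $e_i(r_1,\ldots,r_d)=e_i(r_1,\ldots,r_{d-1})+r_d\,e_{i-1}(r_1,\ldots,r_{d-1})$ to solve linearly for $r_d$ is clean; the genericity conditions you invoke ($e_{i-1}\neq 0$, the $r_j$ pairwise distinct, and $r_d\neq r_j$) are each the complement of the zero set of a nonzero polynomial on $\RR^{d-1}$, hence simultaneously satisfiable, and since $1\leq i\leq d-1$ the polynomial $e_{i-1}$ in $d-1$ variables is indeed not identically zero. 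For part (2), your Rolle iteration replaces the sign-counting of Descartes: if $F$ had $d$ distinct real roots then $F^{(d-i-1)}$, of degree $i+1$, would have $i+1$ distinct real roots, yet $c_i=c_{i+1}=0$ forces $x^2\mid F^{(d-i-1)}$; the coefficient computation you display is correct. The one substantive point is the one you raise yourself: as transcribed, conclusion (2) is literally false, since $F=x^3+c_1x^2$ has $c_2=c_3=0$ and only real roots. What your argument proves, and what is both true and actually needed when the lemma is combined with the Apolarity Lemma (where apolar schemes must be reduced), is that $F$ cannot have $d$ \emph{distinct} real roots. You are right to read the statement that way, and it would be worth recording explicitly that this is the form in which the lemma is used in the proof of Theorem \ref{Upper Bound THM}. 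With that reading, both parts of your proof are complete.
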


\indent In order to prove the characterization of monomials whose complex and real ranks are equal, featured in Theorem \ref{characterization}, we recall the trace bilinear form of finite $\mathbb K$-algebras; we refer to \cite{PRS} for details. \\
\indent Let $A=\mathbb K[X_0,\ldots,X_n]/I$ be a finite $\mathbb K$-algebra. For any element $F\in A$, we define the endomorphism $m_F\in \textnormal{End}(A)$ to be the multiplication by $F$. Since $A$ is a finite dimensional $\mathbb K$-vector space, we have a trace map $\textnormal{Tr}_{A/\mathbb K}: \textnormal{End}(A)\rightarrow \mathbb K$, which is the trace of the corresponding matrix. We define a symmetric bilinear form 
$$B(F,G): A \otimes A \rightarrow \mathbb K,$$ \noindent by 
$$
B(F,G)=\textnormal{Tr}(m_F\cdot m_G)=\textnormal{Tr}_{A/\mathbb K}(m_{F\cdot G}).
$$

The following result is featured in \cite[Thm. 2.1]{PRS}; we give an elementary proof for the sake of completeness.

\begin{proposition}\label{bilinear form positive definite}

Let $A$ be a reduced finite $\RR$-algebra of dimension $N$. If $\textnormal{ Spec}A$ consists only of $\RR$-points, then the bilinear form $$B: A\otimes A\to \RR,\,(F,G)\mapsto \textnormal{Tr}_{A/\RR}(m_{F\cdot G})$$ is positive definite.
 
\end{proposition}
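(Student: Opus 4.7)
The plan is to reduce the statement to an explicit product-of-fields description of $A$, where the bilinear form $B$ becomes the standard Euclidean inner product, from which positive definiteness is immediate.

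First, I would exploit the structural hypotheses on $A$. Since $A$ is a finite-dimensional $\RR$-algebra, it is Artinian, so the Chinese Remainder Theorem decomposes $A$ as a product of local Artinian $\RR$-algebras indexed by its (finitely many) maximal ideals. Because $A$ is reduced, each local factor is itself reduced and Artinian, hence a field, so $A$ splits as a product of field extensions of $\RR$. The assumption that $\textnormal{Spec} A$ consists only of $\RR$-points forces every residue field to be $\RR$, so altogether there is an isomorphism of $\RR$-algebras $A \cong \RR^N$, with the factors indexed by the $N$ closed points.

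Next, I would transport the bilinear form through this isomorphism. Writing $F = (F_1,\ldots,F_N)$ and $G = (G_1,\ldots,G_N)$ in coordinates on $\RR^N$, the multiplication endomorphism $m_F$ is represented in the canonical idempotent basis by the diagonal matrix $\textnormal{diag}(F_1,\ldots,F_N)$, and similarly for $m_G$. Thus $m_F \cdot m_G = m_{FG} = \textnormal{diag}(F_1 G_1, \ldots, F_N G_N)$, whose trace is
\[
B(F,G) = \textnormal{Tr}_{A/\RR}(m_{FG}) = \sum_{i=1}^N F_i G_i.
\]
This is exactly the standard Euclidean inner product on $\RR^N$, which is positive definite. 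In particular, $B(F,F) = \sum_i F_i^2 \ge 0$, with equality only when every $F_i = 0$, i.e.\ $F = 0$ in $A$.

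The only real content is the structural step that pins down $A \cong \RR^N$; once that is in hand, the trace computation is mechanical. I do not expect serious obstacles, since reducedness plus finite dimensionality over a field immediately yields a product of fields via the standard Artinian argument, and the $\RR$-point hypothesis kills any possibility of a $\CC$-factor. One stylistic choice to make is whether to phrase the diagonalization abstractly (via the canonical primitive idempotents $e_1,\ldots,e_N$ corresponding to the closed points, for which $e_i e_j = \delta_{ij} e_i$ and $m_{e_i}$ is the projection onto the $i$-th factor) or directly in coordinates on $\RR^N$ as above; the latter seems cleaner for an elementary exposition.
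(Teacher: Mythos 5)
Your proposal is correct and follows essentially the same route as the paper: decompose $A\cong\RR^N$ as a product of copies of $\RR$, observe that multiplication by $F$ is diagonal in this description, and read off $B(F,F)=\sum_i F_i^2$. If anything, your structural step is more carefully justified than the paper's, which attributes the splitting $A\cong\RR\times\cdots\times\RR$ solely to reducedness, whereas you correctly note that the hypothesis that $\textnormal{Spec}\,A$ has only $\RR$-points is also needed to rule out factors isomorphic to $\CC$.
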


\begin{proof}

The $\RR$-algebra $A$ is isomorphic to $\RR\times\cdots\times \RR$ because $A$ is reduced. The representing matrix of the $\RR$-linear map $A\to A$ given by multiplication with an element $F=(F_1,\ldots,F_N)\in A$ is the diagonal matrix with diagonal entries $F_1,\ldots,F_N$. Thus, we have $B(F,F)=\textnormal{Tr}_{A/\RR}(m_{F^2})=F_1^2+\ldots+F_N^2\geq 0$ and $B(F,F)=0$ if and only if $F=0$. Hence $B$ is positive definite. 
\end{proof}

\section{Real and complex ranks of monomials}\label{Real and complex of monomials}
\indent

In this section we prove our main results. First, we give an upper bound for the real rank of monomials.

\begin{theorem}\label{Upper Bound THM}
 If $M=x_0^{a_0}\ldots x_n^{a_n}$ with $0< a_0\leq \ldots\leq a_n$, then $$\rk_{\RR}(M)\leq \frac{1}{2a_0}\prod_{i=0}^n (a_i+a_0).$$
\end{theorem}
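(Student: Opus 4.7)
The plan is to apply the Apolarity Lemma (Lemma~\ref{Apolarity}) over $\RR$ by producing a reduced apolar set $\XX\subset \PP^n(\RR)$ of
\[
r := \prod_{i=1}^n(a_i+a_0) = \frac{1}{2a_0}\prod_{i=0}^n(a_i+a_0)
\]
real points whose ideal is contained in $M^\perp$. The key observation is that a suitable complete intersection inside $M^\perp$ can be generated by $n$ bivariate forms, each involving only the pair $X_0$ and $X_i$, whose real roots are controlled by Lemma~\ref{BCG Lemma}.

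For each $i=1,\ldots,n$, the first step is to construct a real binary form $g_i(X_0,X_i)$ of degree $d_i := a_i+a_0$ having $d_i$ pairwise distinct real roots in $\PP^1$ and in which the coefficient of $X_0^{a_0}X_i^{a_i}$ vanishes. A direct computation of $g_i\circ M$ shows that this single vanishing is exactly what is needed to put $g_i$ into $M^\perp$: every other monomial $X_0^{b}X_i^{d_i-b}$ appearing in $g_i$ satisfies either $b>a_0$ or $d_i-b>a_i$ and therefore already annihilates $M$ under differentiation. The existence of such a $g_i$ follows from Lemma~\ref{BCG Lemma}(1) applied to the dehomogenization $g_i(1,X_i)$, since the required vanishing index $a_0$ is strictly less than $d_i=a_0+a_i$. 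A small generic perturbation of the remaining free coefficients additionally ensures that the coefficients of $X_0^{d_i}$ and of $X_i^{d_i}$ are both nonzero, so that all $d_i$ real roots of $g_i$ correspond to finite and nonzero real slopes $\alpha_{i,k}$ on the line $X_i/X_0$.

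The second step is to intersect. In $\PP^n$, the zero set $V(g_i)$ is the union of the $d_i$ real hyperplanes $\{X_i=\alpha_{i,k}X_0\}$. Since the $n$ hypersurfaces $V(g_i)$ involve pairwise disjoint sets of variables apart from the shared $X_0$, they meet transversally in the affine chart $\{X_0\neq 0\}$ along the $\prod_i d_i=r$ distinct real points $[1:\alpha_{1,k_1}:\cdots:\alpha_{n,k_n}]$, one for each tuple $(k_1,\ldots,k_n)$; the nonvanishing of the leading coefficients of the $g_i$ prevents any contribution from the hyperplane at infinity $\{X_0=0\}$. Hence $\XX:=V(g_1,\ldots,g_n)$ is a reduced zero-dimensional subscheme of $\PP^n(\RR)$ whose cardinality equals the B\'ezout number $\prod_i d_i$; in particular $g_1,\ldots,g_n$ is a regular sequence and $(g_1,\ldots,g_n)=I_\XX$.

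The final step is then immediate: since each $g_i\in M^\perp$, one has $I_\XX=(g_1,\ldots,g_n)\subset M^\perp$, and the real version of Lemma~\ref{Apolarity} (which produces real coefficients $c_j$ in the Waring decomposition) yields $M=\sum_{j=1}^r c_j L_j^d$ with $c_j\in\RR$ and real linear forms $L_j$, proving $\rk_\RR(M)\leq r$. The main place that calls for care is the verification that $\XX$ has exactly $r$ distinct real points; the benefit of choosing each $g_i$ as a binary form in only $(X_0,X_i)$ is that this verification splits into $n$ essentially independent one-variable problems, each handled by Lemma~\ref{BCG Lemma}.
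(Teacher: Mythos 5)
Your proposal is correct and follows essentially the same route as the paper: for each $i$ you build a binary form of degree $a_0+a_i$ in $X_0,X_i$ lying in $M^\perp$ (equivalently, missing the monomial $X_0^{a_0}X_i^{a_i}$, which is exactly the shape $X_0^{a_0+1}g_i+X_i^{a_i+1}h_i$ used in the paper), invoke Lemma~\ref{BCG Lemma}(1) to make all its roots real and distinct, take the resulting grid of $\prod_{i=1}^n(a_0+a_i)$ real points, and conclude by the Apolarity Lemma. The extra details you supply (why a single vanishing coefficient suffices, no points at infinity, the complete-intersection structure of $I_\XX$) are correct and only make explicit what the paper leaves implicit.
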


\begin{proof}
 Clearly $M^{\perp}=(X_0^{a_0+1},\ldots ,X_n^{a_n+1})$. Let us consider
 \begin{align*}
  G_i & = X_0^{a_0+1}g_{i}(X_0,X_i)+X_i^{a_i+1}h_{i}(X_0,X_i),
 \end{align*}
 where $\deg g_i=a_i-1$ and $\deg h_i=a_0-1$, for every $i=1,\ldots,n$.

\noindent Each $G_i$ is a binary form of degree $a_0+a_i$ where the monomial $X_0^{a_0}X_i^{a_i}$ does not appear. Thus, by Lemma \ref{BCG Lemma}, there exists a choice of $g_i$ and $h_i$ such that $G_i$ has $a_0+a_i$ distinct real roots, say $p_{i,j}$ with $j=1,\ldots,a_0+a_i$. Therefore, the ideal $(G_1,\ldots,G_n)\subset M^{\perp}$ is the ideal of the following set of distinct real points:
 $$\XX = \Big \lbrace[1:p_{1,j_1}:\ldots:p_{n,j_n}] ~\Big|~ 1\leq j_i\leq a_0+a_i,~\text{ for }i=1,\ldots,n\Big \rbrace.$$
\noindent By the Apolarity Lemma, the proof is complete.
\end{proof}

As a direct corollary, we have that whenever the least exponent of a monomial is one, then
the real and the complex rank coincide.

\begin{corollary}\label{Exact real rank}

If $M=x_0x_1^{a_1}\ldots x_n^{a_n}$, then $\rk_{\CC}(M)=\rk_{\RR}(M)$.

\end{corollary}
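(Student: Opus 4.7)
The proof is essentially a direct comparison between the upper bound just established and the known formula for the complex rank.

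The plan is to set $a_0 = 1$ in Theorem \ref{Upper Bound THM} and observe that the resulting upper bound coincides with the complex rank given by Theorem \ref{CCG Theorem}. Concretely, specializing the bound yields
\[
\rk_{\RR}(M) \leq \frac{1}{2}\prod_{i=0}^{n}(a_i+1),
\]
and specializing the complex rank formula (with $a_0 = 1$) gives
\[
\rk_{\CC}(M) = \frac{1}{2}\prod_{i=0}^{n}(a_i+1).
\]

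The second ingredient is the trivial inequality $\rk_{\CC}(M) \leq \rk_{\RR}(M)$, which follows because any Waring decomposition over $\RR$ is, in particular, a Waring decomposition over $\CC$, so the minimum number of summands needed over $\CC$ cannot exceed that needed over $\RR$. Chaining the three displays produces
\[
\rk_{\CC}(M) \;\leq\; \rk_{\RR}(M) \;\leq\; \tfrac{1}{2}\prod_{i=0}^{n}(a_i+1) \;=\; \rk_{\CC}(M),
\]
forcing equality throughout.

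There is no real obstacle here: all the work has already been done in Theorem \ref{Upper Bound THM}, whose construction produces an explicit apolar set of $\tfrac{1}{2}\prod(a_i+1)$ real points when $a_0 = 1$; the corollary is just the observation that this matches the complex lower bound.
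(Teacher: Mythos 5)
Your proof is correct and is essentially identical to the paper's: both specialize the bound of Theorem \ref{Upper Bound THM} to $a_0=1$, compare it with the formula of Theorem \ref{CCG Theorem}, and conclude via the trivial inequality $\rk_{\CC}(M)\leq\rk_{\RR}(M)$. You merely spell out the last step, which the paper leaves implicit.
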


\begin{proof}

If $a_0=1$, the upper bound given by Theorem \ref{Upper Bound THM} equals the complex rank of $M$ given by the formula
in Theorem \ref{CCG Theorem}.
\end{proof}

\begin{remark}\label{a0=1 decomposition}

Here we produce a  family of minimal real Waring decompositions for $M=x_0x_1^{a_1}\ldots x_n^{a_n}$.
Consider a set of real numbers
$$
\Big \lbrace p_{i,j} \in \RR \ \Big| \ i=1,\ldots,n \mbox{ and } j=1,\ldots,a_i+1\Big\rbrace,
$$
\noindent such that $$\sum_{j=1}^{a_i+1} p_{i,j}=0 \mbox {, for any } i,$$
\noindent and $p_{i,a}\neq p_{i,b}$ if $a\neq b$, for any $i$. Hence, we have a set $\XX$ of
$(a_1+1)\cdot \ldots \cdot (a_n+1)$ distinct reduced real points in $\PP^n$ given by
$$\XX = \Big \lbrace[1:p_{1,j_1}:\ldots:p_{n,j_n}] ~\Big|~ 1\leq j_i\leq a_i+1,~ i=1,\ldots,n\Big\rbrace.$$
We define the forms $G_i=\prod_{j=1}^{a_i+1}(X_i-p_{i,j}X_0), \mbox{ for } i=1,\ldots,n$.
Since $\sum_{j=1}^{a_i+1} p_{i,j}=0$, the $G_i$'s are in $M^{\perp}$ and they generate the  ideal of $\XX$,
which, by the Apolarity Lemma, gives a minimal real Waring decomposition of $M$. \\
\indent The family of decompositions shown above can be parametrized as follows. Each decomposition is in bijection
with $n$ binary forms of degree $a_i$ (because the last zero is determined by the others) whose roots are all real and distinct. Each binary form of degree $a_i$ is in the projective space
$\mathbb P(\RR[X_0,X_i]_{a_i})$ and sits in the complement of the discriminant. Furthermore, each of these binary forms is in the connected component consisting of binary forms whose roots are all real, i.e. hyperbolic binary forms. Thus, the $n$-fold product of the connected components consisting of hyperbolic binary forms of degrees $a_i$  gives the desired parametrization.

\end{remark}

We give a result on the number of real solutions of some family of complete intersections, which has a similar flavour of the Descartes' rule of signs in the context of systems of polynomial equations. 

\begin{theorem}\label{generalization of Descartes}

Let $2\leq a_0 \leq \ldots \leq a_n$. For $i=1,\ldots,n,$ let $F_i \in \RR[X_1,\ldots,X_n]$ be a polynomial of degree at most $a_i-a_0$. Then the system of polynomial equations defined by 

 \begin{eqnarray}\label{system with gaps}                                                                    
  G_1=X_1^{a_1+1}+F_1&=&0,  \nonumber \\
 & \vdots& \\
 G_n=X_n^{a_n+1}+F_n&=&0, \nonumber
  \end{eqnarray}

 does not have $\prod_{i=1}^n(a_i+1)$ real distinct solutions. \\
 
 \begin{proof}
 
We give a proof by contradiction, assuming that the number of real distinct solutions is $\prod_{i=1}^n(a_i+1)$. 
Let $I=(G_1,\ldots,G_n)\subseteq \RR[X_1,\ldots,X_n]$ be the ideal generated by $G_1,\ldots,G_n$. We consider the $\RR$-algebra $A=\RR[x_1,\ldots,x_n]/I$ and the bilinear form \[B: A\otimes A\to \RR,\,(H,K)\mapsto \textnormal{Tr}_{A/\RR}(m_{H\cdot K}).\] \indent Since the system has $\prod_{i=1}^n(a_i+1)$ real distinct solutions, $B$ is positive definite by Proposition \ref{bilinear form positive definite}. The residue classes of the monomials $X_1^{\alpha_1}\cdots X_n^{\alpha_n}$ with $0\leq \alpha_i\leq a_i$ form a basis of $A$ as a vector space over $\RR$. We want to show that the representing matrix $M$ of the $\RR$-linear map \[\varphi: A \to A,\,H\mapsto X_1^2\cdot H\] with respect to this basis has only zeros on the diagonal. This would imply $B(X_1,X_1)=\textnormal{Tr}_{A/\RR}(m_{X_1^2})=0$, which in turn would imply that $B$ is not positive definite. \\
For $0\leq\alpha_i\leq a_i$ we have $\varphi(X_1^{\alpha_1}\cdots X_n^{\alpha_n})=X_1^{\alpha_1+2}\cdot X_2^{\alpha_2}\cdots X_n^{\alpha_n}$. If $\alpha_1+2\leq a_1$, then the column of $M$ corresponding to the basis element $X_1^{\alpha_1}\cdots X_n^{\alpha_n}$ has its only nonzero entry at the row corresponding to the basis element $X_1^{\alpha_1+2}\cdot X_2^{\alpha_2}\cdots X_n^{\alpha_n}$. If $\alpha_1+2>a_1$, then $\varphi(X_1^{\alpha_1}\cdots X_n^{\alpha_n})=-F_1\cdot X_1^{\alpha_1+1-a_1}\cdot X_2^{\alpha_2}\cdots X_n^{\alpha_n}$. It follows from our assumptions on the degrees of the $F_i$'s, that the element $\varphi(X_1^{\alpha_1}\cdots X_n^{\alpha_n})$ is in the span of all basis elements corresponding to monomials of degree smaller than $\sum_{i=1}^n \alpha_i$. In both cases, the corresponding diagonal entry of $M$ is zero. This concludes the proof. 
 
\end{proof}

\end{theorem}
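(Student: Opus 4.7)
The plan is to argue by contradiction using Proposition \ref{bilinear form positive definite}: assume the system has $\prod_{i=1}^n(a_i+1)$ distinct real solutions, and then force the trace bilinear form $B$ on $A=\RR[X_1,\ldots,X_n]/I$ (where $I=(G_1,\ldots,G_n)$) to violate positive definiteness. Under the assumption, $I$ is a radical ideal with the maximal possible number of real zeros by Bezout, so $A$ is reduced of dimension $\prod_i(a_i+1)$ and $\textnormal{Spec}\,A$ consists entirely of $\RR$-points. Proposition \ref{bilinear form positive definite} then gives that $B$ is positive definite, so it suffices to exhibit a nonzero $H\in A$ with $B(H,H)=\textnormal{Tr}_{A/\RR}(m_{H^2})=0$. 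The natural candidate is $H=X_1$, reducing the problem to the claim $\textnormal{Tr}_{A/\RR}(m_{X_1^2})=0$.

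Next I would pin down a convenient monomial basis of $A$. Because $\deg F_i \leq a_i-a_0 < a_i+1$, the generators $G_i=X_i^{a_i+1}+F_i$ have pairwise coprime leading monomials $X_i^{a_i+1}$ in graded lexicographic order, so by Buchberger's criterion $\{G_1,\ldots,G_n\}$ is a Gröbner basis and the standard monomials
\[
\Big\{\,X_1^{\alpha_1}\cdots X_n^{\alpha_n} \ :\ 0\leq \alpha_i\leq a_i \,\Big\}
\]
form an $\RR$-basis of $A$ whose cardinality matches $\dim_{\RR} A=\prod_i(a_i+1)$.

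The heart of the argument is to show that the matrix $M$ of $m_{X_1^2}$ in this basis has zero diagonal. Fix a basis monomial $X^\alpha=X_1^{\alpha_1}\cdots X_n^{\alpha_n}$ and reduce $X_1^{\alpha_1+2}X_2^{\alpha_2}\cdots X_n^{\alpha_n}$ modulo $I$. If $\alpha_1+2\leq a_1$ the product is already a distinct basis element, contributing nothing to the $(\alpha,\alpha)$-entry. Otherwise, the single substitution $X_1^{a_1+1}\equiv -F_1$ produces $-X_1^{\alpha_1+1-a_1}F_1\,X_2^{\alpha_2}\cdots X_n^{\alpha_n}$, a polynomial of total degree at most $(\alpha_1+1-a_1)+(a_1-a_0)+\sum_{i\geq 2}\alpha_i=\sum_i\alpha_i+1-a_0<\sum_i\alpha_i$, since $a_0\geq 2$. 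Any subsequent reduction by another $G_j$ strictly decreases the total degree too, because $\deg F_j \leq a_j - a_0 < a_j+1$; iterating, the normal form lies in the $\RR$-span of basis monomials of total degree strictly below $\deg X^\alpha$, so its $X^\alpha$-coefficient is zero.

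The main obstacle I expect is ensuring the degree bookkeeping is airtight across multiple reductions, which is precisely where the hypothesis $a_0\geq 2$ enters: it guarantees that every substitution $X_i^{a_i+1}\mapsto -F_i$ strictly lowers total degree, so the iterated normal-form computation stays in strictly lower degree than $\deg X^\alpha$. If instead $a_0=1$, the substitutions can preserve degree and the diagonal entries need not vanish, which is consistent with Corollary \ref{Exact real rank} where the monomial $x_0x_1^{a_1}\cdots x_n^{a_n}$ does attain $\prod_{i\geq 1}(a_i+1)$ real zeros of its apolar ideal.
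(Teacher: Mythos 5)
Your proposal is correct and follows essentially the same route as the paper: contradiction via the trace form of Proposition \ref{bilinear form positive definite}, the standard monomial basis of $A$, and the vanishing of the diagonal of the multiplication matrix of $X_1^2$ via the degree drop coming from $\deg F_1\leq a_1-a_0$ with $a_0\geq 2$. The only difference is that you spell out two points the paper leaves implicit (the Gr\"obner-basis justification that the standard monomials form a basis, and that the iterated reductions keep the total degree strictly below $\deg X^{\alpha}$), which is a tightening of the same argument rather than a different one.
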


We now give a characterization for those monomials whose real and complex ranks coincide.

\begin{theorem}\label{characterization}

 Let $M = x_0^{a_0}\cdots x_n^{a_n}$ be a degree $d$ monomial with $0<a_0\leq \ldots \leq a_n$. Then
 $$
  \rk_{\RR}(M) = \rk_{\CC}(M)
 $$
if and only if $a_0=1$.

\begin{proof}

If $a_0=1$, Corollary \ref{Exact real rank} proves the statement. Suppose that  $\rk_{\RR}(M) = \rk_{\CC}(M)$ and let $\XX$ be a minimal set of real points apolar to $M$. Assume by contradiction that $a_0\geq 2$. By Theorem \ref{BBT Prop}, we know that $\XX$ is a complete intersection and  we may dehomogenize by $X_0 = 1$. The set $\XX$ gives the solutions to a system of polynomial equations of the form (\ref{system with gaps}) in Theorem \ref{generalization of Descartes}. This is a contradiction and this concludes the proof.

\end{proof}
\end{theorem}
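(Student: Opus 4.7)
The ``if'' direction is immediate from Corollary \ref{Exact real rank}, so the plan is to prove the contrapositive of the ``only if'' direction: assuming $a_0\geq 2$, derive a contradiction from the hypothesis $\rk_{\RR}(M)=\rk_{\CC}(M)$. The overall strategy is to combine the structural description of minimal apolar sets of a monomial (Theorem \ref{BBT Prop}) with the generalized Descartes-type statement (Theorem \ref{generalization of Descartes}).

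Concretely, if $\rk_{\RR}(M)=\rk_{\CC}(M)$, then by the Apolarity Lemma (Lemma \ref{Apolarity}) there exists a set $\XX\subset\PP^n$ of $\rk_{\CC}(M)$ \emph{real} reduced points apolar to $M$. Since $\XX$ realizes the minimal apolar cardinality, Theorem \ref{BBT Prop} applies and forces $I_{\XX}$ to be the complete intersection
\[
I_{\XX}=(X_1^{a_1+1}-F_1X_0^{a_0+1},\ldots,X_n^{a_n+1}-F_nX_0^{a_0+1}),
\]
where $\deg F_i=a_i-a_0\geq 0$ (this last inequality being where the hypothesis $a_0\leq a_i$ enters, and where requiring $a_0\geq 2$ matches the hypothesis of Theorem \ref{generalization of Descartes}).

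Next, I would dehomogenize with respect to $X_0=1$: since $\XX$ consists of $\rk_{\CC}(M)=\prod_{i=1}^n(a_i+1)$ real points and none of them can lie on the hyperplane $X_0=0$ (the leading terms $X_i^{a_i+1}$ of the generators already show that $X_0=0$ cannot occur as a common solution), dehomogenization preserves the number of real solutions. The resulting affine system is exactly of the form
\[
X_i^{a_i+1}+\widetilde F_i(X_1,\ldots,X_n)=0,\qquad i=1,\ldots,n,
\]
with $\deg \widetilde F_i\leq a_i-a_0$, i.e.\ precisely the shape appearing in the statement of Theorem \ref{generalization of Descartes}.

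Finally, since the complex rank equals $\rk_{\CC}(M)=\frac{1}{a_0+1}\prod_{i=0}^n(a_i+1)=\prod_{i=1}^n(a_i+1)$, the existence of a real apolar set $\XX$ of this cardinality means that the above system has $\prod_{i=1}^n(a_i+1)$ real distinct solutions, contradicting Theorem \ref{generalization of Descartes}. The main conceptual obstacle is really already packaged inside Theorem \ref{generalization of Descartes} (the trace-form positivity argument of Proposition \ref{bilinear form positive definite}); once that is in hand, the rest of the proof is a short chain of structural invocations, with the only small care needed being the verification that the BBT normal form dehomogenizes into the precise shape required, and that the assumption $a_0\geq 2$ is exactly the ``gap'' condition that makes Theorem \ref{generalization of Descartes} applicable.
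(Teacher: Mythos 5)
Your proposal is correct and follows essentially the same route as the paper: the ``if'' direction via Corollary \ref{Exact real rank}, and the ``only if'' direction by contradiction, combining Theorem \ref{BBT Prop} with Theorem \ref{generalization of Descartes} after dehomogenizing at $X_0=1$. You actually supply slightly more detail than the paper does (the computation $\rk_{\CC}(M)=\prod_{i=1}^n(a_i+1)$ and the check that no apolar point lies on $X_0=0$, so dehomogenization loses nothing), both of which are correct and welcome.
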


Finally, we show that the upper bound in Proposition \ref{Upper Bound THM} is not always sharp.

\begin{proposition}\label{Prop_d=2_upperBound}
Let $M=x_0^2\ldots x_n^2$. Then $\rk_{\RR}(M) \leq (3^{n+1}-1)/2$.
\end{proposition}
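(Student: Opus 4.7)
The plan is to apply the Apolarity Lemma (Lemma~\ref{Apolarity}) to the reduced zero-dimensional real scheme
\[
\XX=\bigl\{[\epsilon_0:\cdots:\epsilon_n]\in\PP^n_{\RR}~:~\epsilon_i\in\{-1,0,1\},\ (\epsilon_0,\ldots,\epsilon_n)\neq 0\bigr\}\big/\{\pm 1\},
\]
which consists of exactly $(3^{n+1}-1)/2$ distinct real points. Since $M^\perp=(X_0^3,\ldots,X_n^3)$, the bound will follow once I establish $I_\XX\subseteq M^\perp$.

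The natural candidates for generators of $I_\XX$ lying in $M^\perp$ are the quartics
\[
F_{ij}=X_iX_j(X_i^2-X_j^2)=X_i^3X_j-X_iX_j^3,\qquad 0\le i<j\le n.
\]
Each $F_{ij}$ visibly belongs to $M^\perp$ (every monomial has one exponent equal to $3$) and vanishes at every point of $\XX$: for $\epsilon_i,\epsilon_j\in\{-1,0,1\}$ one has $\epsilon_i^2=\epsilon_j^2$ whenever both are nonzero, and $F_{ij}(\epsilon)=0$ trivially if one of them is zero. A direct case analysis shows $V(F_{ij}:i<j)=\XX$ set-theoretically: the requirement that $F_{ij}=0$ for every pair forces any two nonzero coordinates of a point to be equal in absolute value, so (after rescaling) the point is exactly an element of $\XX$.

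The main step is to verify that this scheme is reduced. At $p=[\epsilon]\in\XX$ with support $S=\{k:\epsilon_k\neq 0\}$, the partials $\partial_iF_{ij}=3X_i^2X_j-X_j^3$ and $\partial_jF_{ij}=X_i^3-3X_iX_j^2$ evaluated at $\epsilon$ show that a tangent vector $v=(v_0,\ldots,v_n)$ must satisfy $v_k=0$ for $k\notin S$ (from pairs $F_{ij}$ with exactly one of $i,j$ in $S$) and $\epsilon_jv_i=\epsilon_iv_j$ for $i,j\in S$ (from pairs with both indices in $S$); these relations force $v=\lambda\epsilon$, so the projective tangent space at $p$ is $0$-dimensional. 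Therefore $(F_{ij})$ is a radical $0$-dimensional homogeneous ideal defining $\XX$ scheme-theoretically; being radical with no component at the irrelevant ideal, it is automatically saturated, hence equals $I_\XX$. This gives $I_\XX=(F_{ij})\subseteq M^\perp$, and the Apolarity Lemma yields $\rk_\RR(M)\le|\XX|=(3^{n+1}-1)/2$.

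The main obstacle is the Jacobian computation above, which has to be done uniformly across all points of $\XX$ and all four cases for the pair $(i,j)$ relative to $S$; one could alternatively bypass the saturation concern by proving directly that evaluation on $\XX$ is injective on the span of monomials $X^\alpha$ with $\alpha_i\le 2$, via character orthogonality on the groups $\{\pm 1\}^S$ indexed by supports $S$, but the Jacobian route is more concise.
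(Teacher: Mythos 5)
You use the same point set $\XX$ as the paper, but a genuinely different strategy: the paper never computes $I_\XX$, it simply exhibits the explicit identity $\tfrac{(2n+2)!}{2}\,x_0^2\cdots x_n^2=\sum_{i=1}^{n+1}(-2)^{n+1-i}R_i$, where $R_i$ is the sum of the $(2n+2)$-th powers of the linear forms attached to the points of $\XX$ with exactly $i$ nonzero coordinates; this is already a real Waring decomposition with $|\XX|$ terms. You instead try to verify the hypothesis $I_\XX\subseteq M^\perp$ of the Apolarity Lemma by producing generators of $I_\XX$ inside $M^\perp$, and there is a genuine gap at the decisive step of that verification.

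What your (correct) computations show is that $F_{ij}\in M^\perp$, that the common zero locus of the $F_{ij}$ is exactly $\XX$, and that the Jacobian of the $F_{ij}$ has rank $n$ at every nonzero point of the affine cone over $\XX$. This proves that $\mathrm{Proj}\bigl(T/(F_{ij})\bigr)$ is reduced and equal to $\XX$, i.e.\ that the \emph{saturation} of $(F_{ij})$ is $I_\XX$. It does \emph{not} prove that $(F_{ij})$ is radical or saturated: smoothness along the punctured cone is blind to an embedded primary component at the irrelevant ideal $\mathfrak{m}=(X_0,\ldots,X_n)$. (Compare $(X_0^2,X_0X_1)\subseteq\RR[X_0,X_1]$: its Proj is a single reduced point, yet the ideal is neither radical nor equal to $(X_0)$.) Your sentence ``being radical with no component at the irrelevant ideal, it is automatically saturated'' asserts precisely what remains to be shown. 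Nor can you repair this by saturating both sides of $(F_{ij})\subseteq M^\perp$, because $M^\perp=(X_0^3,\ldots,X_n^3)$ is $\mathfrak{m}$-primary and its saturation is the unit ideal. So from what you have proved, a low-degree element of $I_\XX\setminus(F_{ij})$ could a priori fail to lie in $M^\perp$, and the Apolarity Lemma cannot be invoked. To close the gap you must show $(I_\XX)_d=(F_{ij})_d$ in every degree (for instance by a Hilbert function computation, or by induction restricting to the coordinate hyperplanes), or else prove $I_\XX\subseteq M^\perp$ directly; note that for the latter, injectivity of evaluation on the cube-free monomials of a fixed degree is not by itself sufficient, since you must also rule out a cube-free form and an element of $M^\perp$ having equal nonzero restrictions to $\XX$. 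The paper's explicit identity sidesteps all of this.
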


\begin{proof}
We explicitly give an apolar set of points for $M$ as follows.
For any $i=0,\ldots,n$, let us consider the set  $$\XX_i = \Big \lbrace[p_0:\ldots:p_{i-1}:1:p_{i+1}:\ldots:p_n]\in\PP^n ~|~ p_i\in\{0,\pm1\}\Big \rbrace.$$
We can easily determine the cardinality of $\XX = \bigcup_{i=0}^{n}\XX_i$. From all $(n+1)$-tuples
$(p_0,\ldots,p_n)$ with $p_i=0,\pm 1$, we need to discard $(0,\ldots,0)$, since it does not correspond to any
point in the projective space. We are double counting, since $(p_0,\ldots,p_n)$ and $(-p_0,\ldots,-p_n)$ define
the same point in the projective space. Thus, $|\XX| = (3^{n+1}-1)/2$.
For each  $P\in \XX$,  let $L_P$ denote the corresponding linear form $p_0x_0+\ldots+p_nx_n$ and $n(P)$ the number
of entries different from zero. For each $i=1,\ldots,n+1$, we set $$R_i = \sum_{\substack{P\in\XX \\ n(P)=i}} L_P^{2n+2}.$$
\noindent By direct computation, we obtain $$\frac{(2n+2)!}{2}x_0^2\ldots x_n^2 = \sum_{i=1}^{n+1} (-2)^{n+1-i}R_i.$$
\noindent Thus, $\mathbb X$ is apolar to $M$ and this concludes the proof.
\end{proof}

\begin{example}

For $n=1$, we have the following real decomposition of $M=x_0^2x_1^2$:
$$12 x_0^2x_1^2 = R_2 -2 R_1 =  (x_0+x_1)^4+(x_0-x_1)^4 - 2(x_0^4+x_1^4).$$
\noindent For $n=2$, we have $\rk_{\CC}(x_0^2x_1^2x_2^2)=9\text{ and }10\leq\rk_{\RR}(x_0^2x_1^2x_2^2)\leq 13$:

$$ 360 x_0^2x_1^2x_2^2  = R_3 - 2 R_2 + 4 R_1 = $$
$$ = (x_0+x_1+x_2)^6 + (x_0+x_1-x_2)^6 + (x_0-x_1+x_2)^6 + (x_0-x_1-x_2)^6 + $$
$$ -2[ (x_0+x_1)^6 +(x_0-x_1)^6 +(x_0+x_2)^6+(x_0-x_2)^6 + (x_1+x_2)^6 +(x_1-x_2)^6] + $$
$$ +4 (x_0^6+x_1^6+x_2^6). $$

\noindent In \cite[Example 6.7]{MMSV}, it is proved that $\rk_{\RR}(x_0^2x_1^2x_2^2) > 10$. 
\end{example}

\smallskip

{\bf Acknowledgments.} We would like to thank A. Di Scala, C. Guo and G. Ottaviani for useful discussions. 
The third and the fourth author would like to thank the School of Mathematical Sciences at Monash University for
the hospitality when this project started.
The first author was supported by GNSAGA group of INDAM.
The second author was supported by the Studienstiftung des deutschen Volkes.
The third author was partially supported by G S Magnuson Fund from Kungliga Vetenskapsakademien.
The fourth author was supported from the Department of Mathematics and Systems Analysis of
Aalto University.

\bigskip

\end{document}